\begin{document}

\title{Linear iterative schemes for doubly degenerate parabolic equations}

% your contribution title if the original one is too long use an abbreviated title (for running head):
\titlerunning{Iterative schemes for degenerate equation}

\author{ Jakub~Wiktor~Both\inst{1} \and Kundan~Kumar\inst{1} \and Jan~Martin~Nordbotten \inst{1, 2} \and Iuliu~Sorin~Pop\inst{3,1} \and Florin~Adrian~Radu\inst{1}   }

% Use \authorrunning{Short Title} for an abbreviated version of the author list (for running head):
\authorrunning{Both et al.}

\institute{
University of Bergen, Department of Mathematics, Bergen, Norway, {\tt \{jakub.both, kundan.kumar, jan.nordbotten, florin.radu\}@uib.no}
\and Princeton University, Department of Civil and Environmental Engineering, Princeton, NJ, USA
\and Hasselt University, Faculty of Sciences, Diepenbeek, Belgium, {\tt sorin.pop@uhasselt.be}
}

\maketitle

\begin{abstract}
Mathematical models for flow and reactive transport in porous media often involve non-linear, degenerate parabolic equations. Their solutions have low regularity, and therefore lower order schemes are used for the numerical approximation. Here the backward Euler method is combined with a mixed finite element method scheme, which results in a stable and locally mass-conservative scheme. At the same time, at each time step one has to solve a non-linear algebraic system, for which linear iterations are needed. Finding robust and convergent ones is particularly challenging here, since both slow and fast diffusion cases are allowed.

Commonly used schemes, like Newton and Picard iterations, are defined either for non-degenerate problems, or after regularising the problem in the case of degenerate ones. Convergence is guaranteed only if the initial guess is sufficiently close to the solution, which translates into severe restrictions on the time step. Here we discuss a linear iterative scheme which builds on the $L$-scheme, and does not employ any regularisation. We prove its rigourous convergence, which is obtained for mild restrictions on the time step. Finally, we give numerical results confirming the theoretical ones, and compare the behaviour of the scheme with other schemes.
\end{abstract}

\section{Introduction}
\label{Pop:sec:1}

We consider the following non-linear, degenerate parabolic equation
\begin{equation}\label{Pop:sec:1:eq:1}%\label{eq:richards}
\partial_t b(u (t, {\bf x})) - \nabla \cdot \left( \nabla u(t, {\bf x}) \right)  = f(t, {\bf x}), \quad \quad t \in (0, T], {\bf x} \in \Omega,
\end{equation}
with given functions $b: {\mathbb{R}} \rightarrow {\mathbb{R}}$ and $f:  (0, T] \times \Omega  \rightarrow {\mathbb{R}}$. $\Omega$ is a bounded domain in ${\mathbb{R}}^d, d = 1, 2\,  \rm{ or } \, 3$ having Lipschitz continuous boundary $\partial \Omega$ and $T$ is the final time. Initial and boundary conditions (for simplicity the latter are assumed homogeneous Dirichlet) are completing the problem.

Equation \eqref{Pop:sec:1:eq:1} is the transformed Richards equation after applying the Kirchhoff transformation, and in the absence of gravity (see e.g. \cite{Pop:Radu2004,Pop:radu2008}) or a diffusion equation with equilibrium sorption modelled by a Freundlich isotherm (see \cite{Pop:radu2010}). Solving \eqref{Pop:sec:1:eq:1} is of interest for many applications of societal relevance, like environmental pollution, CO$_2$ storage or geothermal energy extraction.

A particular feature of  \eqref{Pop:sec:1:eq:1} is that the the problem may become degenerate, namely change its type from parabolic into elliptic or hyperbolic. One consequence of this is that the solutions typically lack regularity. Here we assume that $b(\cdot)$ is monotone increasing and H\"older continuous, which means that two types of degeneracy are allowed in \eqref{Pop:sec:1:eq:1}. The first is when the derivative of $b(\cdot)$ vanishes ({\it{fast diffusion}}) and the second when it blows up ({\it{slow diffusion}}). In particular, the vanishing of $b^\prime(\cdot)$ may occur on intervals.

Since solutions to degenerate parabolic equations have low regularity (see \cite{Pop:alt}), low order discretisation methods are well suited for the numerical approximation of the solution. Here we combine the backward Euler (BE) method for the time discretisation with the mixed finite element method (MFEM). For the rigorous convergence analysis of the method we refer to \cite{Pop:radu2008, Pop:Pop, Pop:Klausen} and the references therein. The resulting is a scheme that is both stable and locally mass-conservative. At each time step, the outcome is a non-linear algebraic system, for which linear iterative solvers are being required.

In this paper we discuss iterative solvers for the non-linear algebraic systems arising at each time step after the complete discretisation of \eqref{Pop:sec:1:eq:1}. Observe that although referring specifically to the MFEM approach, the non-linear solvers presented here can be also be applied to other spatial discretisations, like finite volumes, conformal or discontinuous Galerkin finite elements.

The literature on non-linear solvers for \eqref{Pop:sec:1:eq:1} is very extensive, but covers in particular non-degenerate problems, or the case when $b(\cdot)$ is Lipschitz continuous. We refer to \cite{Pop:putti,Pop:park} for the Newton method, and to \cite{Pop:celia} for the modified Picard method. A combination of both is discussed in \cite{Pop:lehmann}. Also, the J\"ager-Kacur scheme was introduced in \cite{Pop:jk2}. We refer to \cite{Pop:radu2006} for the analysis of the Newton, modified Picard and the J\"ager-Kacur schemes for BE/MFEM discretisations. Recently, in \cite{Pop:BrennerCances} the capillary pressure and the saturation are expressed both in terms of a new variable, by respecting the the original saturation-capillary pressure dependency. If the new variable is properly chosen, the Richards equation receives a that is more suited for the Newton method, in the sense that all nonlinearities are Lipschitz continuous. We refer to \cite{Pop:Farthing} for a review detailing on such aspects. 

The scheme analysed here builds on the $L$-scheme, a robust fixed point method which does not involve the computations of any derivatives or a regularisation step. The convergence,  proved rigorously in \cite{Pop:pop2004,Pop:slodicka,Pop:yong}, holds in the $H^1$ norm and regardless of the initial guess, but is linear. To improve this convergence, a combination between the $L$ and the Newton schemes was discussed recently in \cite{Pop:list}. By performing first a number of $L$ iterations, one obtains an approximation that is close enough to the solution. After a switch to the Newton iterations, the convergence becomes quadratic.

Compared to the literature cited above, here we adopt a more challenging setting: $b(\cdot)$ is only H\"older continuous and not necessarily strictly increasing. This situation has plenty of practical application, e.g. when van Genucten parametrization with certain parameters is used for Richards' equation, see \cite{Pop:raduthesis}. Whenever  $b^\prime(\cdot)$ is unbounded, neither Newton or Picard methods can be applied directly. The common way to overcome this is to regularise $b(\cdot)$ (see \cite{Pop:Nochetto}), e.g. to approximate it by a Lipschitz continuous function $b_\varepsilon(\cdot)$  (see e.g. \cite{Pop:radu2010,Pop:radu2011}).
Nevertheless, a regularisation will also imply a perturbation of the solution, which affects the accuracy of the method. Here, we propose an $L$-scheme for the degenerate equation \eqref{Pop:sec:1:eq:1}, which is adapted to the H\"older continuous nonlinearity. The linear of the scheme is proved rigorously, and its performance is compared with the ones of the standard $L$- and Newton schemes, applied for the regularised problems. %The resulting schemes are still quadratically convergent and more robust.

The paper is organized as follows. In the next section the fully discrete variational approximation of \eqref{Pop:sec:1:eq:1} is given and the assumptions are stated. Section \ref{Pop:sec:3} is discussing different iterative schemes. First the modified $L$-scheme together with the convergence proof are given. Then the approach based on regularisation is discussed, with particular emphasis on the Newton scheme. Finally, in Section \ref{Pop:sec:5} a comprehensive comparison between the $L$-schemes and the Newton scheme are presented. The paper is concluded with final remarks.

\section{The fully discrete aproximation}
\label{Pop:sec:2}

Throughout this paper we will use common notations in the functional analysis. By $L^p(\Omega)$ we mean the $p$-integrable functions with the norm $\| f \|_p := \big(\int_\Omega f({\bf x})\, d {\bf x})^{1/p}$, whereas ${H(\mathrm{div}; \Omega)} := \{ {\bf f} \in (L^2(\Omega))^d |  \nabla \cdot {\bf f}  \in L^2(\Omega)\}$. Further, we denote by  $\langle\cdot,\cdot \rangle$ the inner product on $L^2(\Omega)$ and by $\sigma(\Omega)$ the volume of $\Omega$. Similarly, by $H^1(\Omega)$ we mean the $L^2(\Omega)$ functions having the first order weak derivatives in $L^2$.

% and by $H^1(\Omega)$ its subspace containing functions having also the first order derivatives in $L^2(\Omega)$. Let $H_0^1(\Omega)$ be the space of functions in  $H^1(\Omega)$ which vanish on $\partial \Omega$. Further, we denote by  $\langle\cdot,\cdot \rangle$ the inner product on $L^2(\Omega)$, and by $\displaystyle \| \cdot \|$ the norm of $L^2(\Omega)$. $L_f$ stays for the Lipschitz constant of a Lipschitz continuous function $f(\cdot)$.

To define the discretisation we let $\mathcal{T}_h$ be a regular decomposition of the domain $\Omega$ ($h$ is the mesh size) and  $0=t_0<t_1<...<t_N=T$, $N\in\mathbb{N}$ is a partition of the time interval $[0,T]$ with  constant time step size $\tau=t_{k+1}-t_k$, $k\geq0$. The lowest-order Raviart-Thomas elements (see e.g.  \cite{Pop:brezzi}) are used for the discretisation in space. The spaces $W_h \times V_h \subset L^2(\Omega) \times {H(\mathrm{div}; \Omega)}$ are defined as
\begin{displaymath}
\begin{array}{l}
W_h := \{ p \in L^2(\Omega) \vert \ p_{\vert T}({{\bf x}}) = p_T \in {\mathbb{R}} \text{ for all } T \in \mathcal{T}_h \}, \\
V_h := \{ {\bf q} \in {H(\mathrm{div}; \Omega)} | {\bf q}_{\vert T}({{\bf x}})= {\bf{a_T}} + b_T {\bf{x}}, {\bf{a_T}} \in {\mathbb{R}}^d, b_T \in {\mathbb{R}} \text{ for all } T \in \mathcal{T}_h \}.
\end{array}
\end{displaymath}

The lemma below (see \cite{Pop:douglas}) will be used in the proof of Theorem  \ref{Pop:sec:2:thm:1}.
\begin{lemma} \label{Pop:sec:2:lem:1} %\label{lemma_thomas}
There exists a constant $C_{\Omega}>0$ not depending on the mesh size $h$, such that given an arbitrary $w_h \in W_h$ there exists ${\bf v}_h \in V_h$, satisfying $\nabla \cdot  {\bf v}_h = w_h$ and $\|{\bf v}_h\| \leq C_{\Omega} \|w_h\|$.
\end{lemma}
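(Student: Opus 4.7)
The plan is to establish this surjectivity-with-stability property of the discrete divergence $\nabla\cdot : V_h \to W_h$ via a Fortin-type construction, combining an auxiliary continuous elliptic problem with the commuting diagram property of the Raviart--Thomas interpolant.

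First, I would construct a continuous candidate flux. Given $w_h \in W_h \subset L^2(\Omega)$, let $\phi \in H^1_0(\Omega)$ be the weak solution of the Poisson problem $-\Delta \phi = w_h$ in $\Omega$, $\phi = 0$ on $\partial\Omega$. Existence, uniqueness and the bound $\|\phi\|_{H^1(\Omega)} \leq C_\Omega \|w_h\|$ follow from Lax--Milgram applied to the coercive bilinear form $(\nabla\phi,\nabla\psi)$ on $H^1_0(\Omega)$, with $C_\Omega$ depending only on the Poincar\'e constant of $\Omega$. By elliptic regularity on the Lipschitz domain $\Omega$ (e.g.\ Jerison--Kenig), $\phi \in H^{1+s}(\Omega)$ for some $s>1/2$ with $\|\phi\|_{H^{1+s}} \leq C_\Omega \|w_h\|$. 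Setting ${\bf v} := -\nabla\phi$, one has ${\bf v} \in H^s(\Omega)^d \cap H(\mathrm{div};\Omega)$, $\nabla\cdot{\bf v} = w_h$, and $\|{\bf v}\|_{H^s} + \|\nabla\cdot{\bf v}\| \leq C_\Omega \|w_h\|$.

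Second, I would project this continuous flux onto $V_h$. Let $\Pi_h$ denote the canonical Raviart--Thomas interpolation operator, which is well defined on $H^s(\Omega)^d \cap H(\mathrm{div};\Omega)$ for $s>1/2$, and define ${\bf v}_h := \Pi_h {\bf v}$. The commuting diagram property $\nabla\cdot\Pi_h = P_h \nabla\cdot$, where $P_h$ is the $L^2$-projection onto $W_h$, yields
$$\nabla\cdot{\bf v}_h \;=\; P_h(\nabla\cdot{\bf v}) \;=\; P_h w_h \;=\; w_h,$$
since $w_h \in W_h$. The stability estimate $\|\Pi_h {\bf v}\| \leq C\bigl(\|{\bf v}\|_{H^s} + \|\nabla\cdot{\bf v}\|\bigr)$, with $C$ independent of $h$, then gives $\|{\bf v}_h\| \leq C_\Omega\|w_h\|$ after absorbing the continuous bounds from the first step.

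I expect the main obstacle to be the mildly subtle regularity issue: the degrees of freedom of the lowest-order Raviart--Thomas element are face fluxes, which are not defined for arbitrary ${\bf v} \in L^2(\Omega)^d$, so one must verify that $\phi$ has just enough extra smoothness for $\Pi_h{\bf v}$ to make sense and for the interpolation estimate to hold uniformly in $h$. On a general Lipschitz domain this is exactly the content of the Jerison--Kenig theory; under the stronger (and common) hypothesis that $\Omega$ is convex, one has full $H^2$-regularity and the estimate becomes textbook. Either way, the construction is $h$-independent because both the elliptic estimate and the stability of $\Pi_h$ are.
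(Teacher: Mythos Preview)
Your argument is correct and is precisely the standard Fortin-type construction underlying this result; the paper itself does not prove the lemma but simply cites Douglas--Roberts, where the same auxiliary Dirichlet problem combined with the Raviart--Thomas interpolant and the commuting diagram property is used. Your discussion of the regularity subtlety (Jerison--Kenig on Lipschitz domains versus full $H^2$-regularity on convex domains) is also accurate and is the only point requiring care.
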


As mentioned, \eqref{Pop:sec:1:eq:1} is completed with homogeneous Dirichlet boundary conditions, and with the initial condition $u(0, {\bf x}) := u_0 ({\bf x})$, with $u_0 \in L^2(\Omega)$. furthermore, the source term is $f \in L^2(\Omega)$. We make the following assumptions on $b(\cdot)$.
\begin{itemize}
 \item[\textbf{(A1)}]  The function $b: {\mathbb{R}} \rightarrow {\mathbb{R}}$, $b(0) = 0$ is monotone increasing and H\"older continuous: there exist $L_b  > 0$ and $ \alpha \in (0,1]$ such that
 %we did not assume that it is bounded
\begin{equation}\label{Pop:sec:2:A1}%\label{hoelder_continuity}
|  b(x) - b(y) |  \le L_b | x - y |^\alpha \quad \quad \text{for all } \quad x, y \in {\mathbb{R}}.
\end{equation}
 %\item[\textbf{(A2)}]  The source term and the initial value are in $L^2(\Omega)$, i.e. $f, u_0  \in L^2(\Omega)$.
 \end{itemize}

\begin{remark} The case $\alpha = 1$ corresponds to a Lipschitz continuous $b(\cdot)$, a case which is relatively well-understood \cite{Pop:putti,Pop:jk2,Pop:lehmann,Pop:list,Pop:park,Pop:pop2004,Pop:radu2006,Pop:slodicka,Pop:yong}.  The case $\alpha \in (0,1)$ is encountered for the Richards equation under physically relevant parameterizations (the van Genuchten curves \cite{Pop:Jan}, see Remark 1.1 in \cite{Pop:radu2008}). Also, if Freundlich rates are used for modelling reactive transport, one has $b(u) = u + \phi(u) $, with $\phi$ increasing but non-Lipschitz. Then there exists an $m \in {\mathbb{R}}$ such that $b^{\prime} \ge m > 0$, which simplifies the analysis of the iterative schemes. % (see \cite{Pop:radu2010}).% quote also eymard, vohralik?
\end{remark}

\begin{remark}  Non-linear %diffusion $D(\cdot)$ and/or 
convection ${\bf q}(\cdot)$ can be added, however, if being Lipschitz continuous. The numerical schemes can be then easily modified to include such changes: one can deal with such non-linearities by using either the outcome at the last iteration, or by including this term in the Newton iteration, depending on the method used. %Moreover, as mentioned before, the nonlinearity in the diffusion can also be eliminated by a Kirchhoff transformation. 
For the ease of presentation, such cases are not considered here.
\end{remark}

In view of the lacking regularity, the solutions to \eqref{Pop:sec:1:eq:1} are weak. We refer to \cite{Pop:alt,Pop:otto} for existence and uniqueness results. Also, the equivalence between the conformal and mixed formulation, for both time continuous and time discrete problems, is being discussed in \cite{Pop:radu2008} (see also \cite{Pop:Klausen} for a multi-point flux discretization and \cite{Pop:radu2018, Pop:radu2015arhiv} for the case of a two-phase flow model). Such results provide the existence and uniqueness of a solution for the mixed formulation, and can be used for obtaining the rigorous convergence of the discretisation. Finally, for each time step, the backward Euler-MFEM discretisation of \eqref{Pop:sec:1:eq:1} reduces to a non-linear, fully discrete variational problem ($n \geq 1$).\\[0.5em]
{\bf Problem $P^n_h$ (The non-linear fully discrete problem)}. \\Let  $u^{n-1}_h \in W_h$ be given. Find $u^{n}_h \in W_h$ and ${\bf q}^{n}_h \in V_h$ such that for any $w_h \in W_h$ and ${\bf v}_h \in V_h$ there holds
\begin{eqnarray}
\langle b(u^n_h)- b(u^{n-1}_h), w_h \rangle + \tau \langle \nabla \cdot {\bf q}^{n}_h, w_h \rangle&=& \langle f, w_h \rangle, \label{Pop:sec:2:eq:1a}\\%\label{eq:disc1}\\
\langle {\bf q}^{n}_h, {\bf v}_h \rangle-\langle u^{n}_h,  \nabla  \cdot {\bf v}_h \rangle &=& 0. \label{Pop:sec:2:eq:1b} %\label{eq:disc2}
\end{eqnarray}
%\marginpar{A referee would ask us to quote something}
Clearly, for $n = 1$, $u^0_h$ can be taken as the $L^2$ projection of the initial condition $u_0$ onto $W_h$ (see also \cite{Pop:radu2008}).

Here we assume that a solution to Problem $P^n_h$ exists and is unique. For $\alpha = 1$, i.e. when $b$ is Lipschitz continuous, Theorem \ref{Pop:sec:2:thm:1} below guarantees that the iterative scheme is $H^1$-contractive. This immediately provides the existence of a solution. For $\alpha \in (0, 1)$, the existence can be proved by using Brouwer«s fixed point theorem (see e.g. Lemma 1.4, p. 140 in \cite{Pop:Temam}). We refer to \cite{Pop:Arbogast,Pop:Chen,Pop:Cherfils,Pop:radu2018} for similar results in the context of two-phase porous media flow models. Finally, since $b$ is monotone, uniqueness can be proved by comparison.

The main challenge in solving the non-linear Problem $P^n_h$ is to construct a linearisation scheme that is converging also for the case when $b(\cdot)$ is only H\"older continuous, implying that $b^\prime(\cdot)$ may become unbounded. The scheme is discussed in the section below. Typically, iterative approaches like the Newton, (modified) Picard, or the $L$-schemes are applied to the regularised problem, with a Lipschitz continuous approximation $b_\varepsilon)$ replacing $b$ (see \cite{Pop:putti,Pop:celia,Pop:list,Pop:park,Pop:pop2004,Pop:radu2015,Pop:slodicka}). This will be detailed in Section \ref{Pop:sec:4}.

\section{A robust iterative scheme}
\label{Pop:sec:3} %\label{subsec:Lscheme}
Below we define a robust iterative scheme for \eqref{Pop:sec:2:eq:1a}-\eqref{Pop:sec:2:eq:1b}, which does not involve regularisation, or computing any derivatives. We let the time step $n \geq 1$ be fixed and assume $u^{n-1}_h \in W_h$ be given. Also, let $L = \dfrac{1}{\delta}$, where $\delta > 0$ is a small parameter that will be chosen later to guarantee that the error decreases below a prescribed threshold. With $i \in {\mathbb{N}}$, $i > 0$ being the iteration index, the iteration step is introduced through
\\[0.5em]
{\bf Problem $P_h^{n,i}$ (The $L$-scheme)}. \\
Let $u^{n, i-1}_h \in W_h$ be given. Find $(u^{n,i}_h, {\bf q}^{n,i}_h) \in W_h \times V_h$ s.t. for all $w_h \in W_h$ and ${\bf v}_h \in V_h$ one has
\begin{eqnarray}
\langle L (u^{n,i}_h - u^{n,i-1}_h) + b(u^{n,i-1}_h), w_h \rangle + \tau \langle \nabla \cdot {\bf q}^{n,i}_h, w_h \rangle&=& \langle b(u^{n-1}_h), w_h \rangle, \label{Pop:sec:2:eq:2a}
%\label{eq:Lscheme1}
\\
\langle {\bf q}^{n,i}_h, {\bf v}_h \rangle-\langle u^{n,i}_h,  \nabla  \cdot {\bf v}_h \rangle &=& 0.  \label{Pop:sec:2:eq:2b}%\label{eq:Lscheme2}
\end{eqnarray}
As will be seen below, the convergence is obtained without imposing restrictions on the initial guess $u^{n,0}_h \in W_h$, but a natural choice is $u^{n-1}_h$.

As for Problem $P^n_h$, the uniqueness of a solution for Problem $P_h^{n,i}$ follows by standard techniques. Specifically, assuming that Problem $P_h^{n,i}$ has two solution pairs $(u^{n,i}_{h, k}, {\bf q}^{n,i}_{h, k}) \in W_h \times V_h$ ($k =1, 2$) and with $(du_h, {\bf dq}_h))$ denoting their difference it holds
\begin{eqnarray*}
L \langle  du_h, w_h \rangle + \tau \langle \nabla \cdot {\bf dq}_h, w_h \rangle&=& 0, %\label{Pop:sec:2:eq:2c}
%\label{eq:Lscheme1}
\\
\langle {\bf q}_h, {\bf v}_h \rangle-\langle du_h,  \nabla  \cdot {\bf v}_h \rangle &=& 0,  %\label{Pop:sec:2:eq:2d}%\label{eq:Lscheme2}
\end{eqnarray*}
for all $w_h \in W_h$ and ${\bf v}_h \in V_h$. Taking in the above $w_h = du_h$, respectively ${\bf v}_h = \tau {\bf du}_h$, and adding the resulting gives
\begin{equation}
L \| du_h\|^2  + \tau \|{\bf q}_h\|^2 = 0,  \label{Pop:sec:2:eq:2e}
\end{equation}
which immediately implies uniqueness. Moreover, since Problem $P_h^{n,i}$ is linear, the uniqueness also implies the existence of the solution.

To show the convergence of the scheme we define the errors
$$
e_u^{n,i}  =   u^{n,i}_h - u^{n}_h, \quad \text{ and } \quad e_{\bf q}^{n,i}   =   {\bf q}^{n,i}_h -  {\bf q}^{n}_h,
$$
where $(u^{n}_h, {\bf q}^{n}_h)$ is the solution pair of Problem $P_h^n$. For proving the convergence of the errors sequence to 0 we use the following elementary results, which hold for any $a, b \geq 0$ and $p, q > 1$ s.t. $\frac 1 p + \frac 1 q = 1$
\begin{equation}\label{Pop:sec:2:eq:2c}
a (a-b) =  \frac 1 2 \left(a^2 - b^2 + (a-b)^2\right), \quad \text{ and } \quad a b \leq \frac {a^p} p + \frac {b^q} q .
\end{equation}

We let $\delta > 0$, $L = \dfrac{1}{\delta}$, $n \in {\mathbb{N}}$, $n \geq 1$ be fixed and assume $u^{n-1}_h \in W_h$ known. The main result supporting the convergence is
\begin{theorem}\label{Pop:sec:2:thm:1}%\label{th_conv_lin_hoelder}
Assuming (A1) and $\alpha \in (0, 1)$, let $i \in {\mathbb{N}}$, $i \geq 1$ and $u^{n,i-1}_h \in W_h$ be given. If $(u^{n}_h, {\bf q}^{n}_h)$ and $(u^{n,i}_h, {\bf q}^{n,i}_h)$ are the solutions of Problems $P_h^{n}$ and $P_h^{n,i}$ respectively, there holds
%\todo[inline]{$\varepsilon$ sollte $\delta$ sein.}
\begin{equation}\label{Pop:sec:2:eq:3}%\label{hoelder_main_result}
\| e_u^{n,i}\|^2  + \tau \delta R(\delta,\tau) \|  e_{\bf q}^{n,i} \|^2 \le R(\delta,\tau) \| e_u^{n,i-1}\|^2 + 2 C(\alpha) R(\delta,\tau) \delta^{\frac{2}{ 1 - \alpha }}.
\end{equation}
Here $R(\delta,\tau)  = \big(1 + \dfrac{ \tau \delta }{ C_{\Omega}^2}\big)^{-1}$, $C_\Omega$ being the constant in Lemma \ref{Pop:sec:2:lem:1}, and $C(\alpha) =\frac {(1 -\alpha)} 2 \big(L_b (2\alpha)^\alpha \big)^{\frac{2}{1-\alpha}} (1 + \alpha)^{-\frac{1+\alpha}{1-\alpha}}\sigma(\Omega)$.
\end{theorem}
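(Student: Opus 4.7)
The plan is to subtract Problem $P_h^n$ from Problem $P_h^{n,i}$ to obtain error equations for $(e_u^{n,i}, e_{\bf q}^{n,i})$, test them with $w_h = e_u^{n,i}$ and ${\bf v}_h = \tau e_{\bf q}^{n,i}$, and sum. Using the first identity in \eqref{Pop:sec:2:eq:2c} on the $L$-term and splitting $e_u^{n,i} = e_u^{n,i-1} + (e_u^{n,i} - e_u^{n,i-1})$ in the $b$-inner product, the monotonicity of $b$ lets me discard the nonnegative contribution $\langle b(u_h^{n,i-1}) - b(u_h^n), e_u^{n,i-1}\rangle \geq 0$. What remains is an inequality of the form $\tfrac{L}{2}\|e_u^{n,i}\|^2 - \tfrac{L}{2}\|e_u^{n,i-1}\|^2 + \tfrac{L}{2}\|e_u^{n,i} - e_u^{n,i-1}\|^2 + \tau\|e_{\bf q}^{n,i}\|^2 \leq |\langle b(u_h^{n,i-1}) - b(u_h^n), e_u^{n,i} - e_u^{n,i-1}\rangle|$, whose right-hand side is bounded pointwise by $L_b |e_u^{n,i-1}|^\alpha |e_u^{n,i} - e_u^{n,i-1}|$ thanks to (A1).

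The heart of the argument is to control this cross term by two successive applications of Young's inequality (the second relation in \eqref{Pop:sec:2:eq:2c}). First, with $p = q = 2$ and scaling $\eta = L/2$, the contribution $\tfrac{L}{2}\|e_u^{n,i} - e_u^{n,i-1}\|^2$ is absorbed into the left-hand side, leaving the residual $\tfrac{L_b^2}{2L}\int_\Omega |e_u^{n,i-1}|^{2\alpha}\,dx$. I then bound $\int_\Omega |e_u^{n,i-1}|^{2\alpha}\,dx \leq \|e_u^{n,i-1}\|^{2\alpha}\sigma(\Omega)^{1-\alpha}$ by H\"older, and apply Young's a second time with conjugate exponents $p = 1/\alpha$, $q = 1/(1-\alpha)$ and a free scaling parameter; this parameter is tuned so that, after multiplying through by $2\delta = 2/L$, the pure-$\delta$ residual collects exactly to $2C(\alpha)\delta^{2/(1-\alpha)}$. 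This calibration is where the factors $(2\alpha)^\alpha$ and $(1+\alpha)^{-(1+\alpha)/(1-\alpha)}$ in $C(\alpha)$ appear.

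Finally, the contraction factor $R(\delta,\tau)$ comes from Lemma~\ref{Pop:sec:2:lem:1}: choosing ${\bf v}_h \in V_h$ with $\nabla\cdot {\bf v}_h = e_u^{n,i}$ and $\|{\bf v}_h\| \leq C_\Omega \|e_u^{n,i}\|$ in the second error equation yields the inverse-Poincar\'e-type bound $\|e_u^{n,i}\| \leq C_\Omega \|e_{\bf q}^{n,i}\|$. Splitting $\tau\|e_{\bf q}^{n,i}\|^2 = \tfrac{\tau}{2}\|e_{\bf q}^{n,i}\|^2 + \tfrac{\tau}{2}\|e_{\bf q}^{n,i}\|^2$ and estimating the second copy from below by $\tfrac{\tau}{2 C_\Omega^2}\|e_u^{n,i}\|^2$ brings the factor $1 + \tau\delta/C_\Omega^2 = 1/R(\delta,\tau)$ in front of $\|e_u^{n,i}\|^2$; dividing through then produces \eqref{Pop:sec:2:eq:3}. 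The main obstacle is the joint calibration of the two Young's scaling parameters together with the split of $\tau\|e_{\bf q}^{n,i}\|^2$, so that the coefficient in front of $\|e_u^{n,i-1}\|^2$ reduces to exactly $R(\delta,\tau)$ while the residual collapses to $2 C(\alpha)R(\delta,\tau)\delta^{2/(1-\alpha)}$ with $C(\alpha)$ as stated; this amounts to a small constrained optimisation problem whose minimiser is encoded in the explicit form of $C(\alpha)$.
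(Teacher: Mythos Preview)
Your overall architecture is right: derive the error equations, test with $e_u^{n,i}$ and $\tau e_{\bf q}^{n,i}$, use the algebraic identity on the $L$-term, and finish with the Poincar\'e-type bound from Lemma~\ref{Pop:sec:2:lem:1}. The gap is in your treatment of the cross term $\langle b(u_h^{n,i-1})-b(u_h^n),\, e_u^{n,i}-e_u^{n,i-1}\rangle$.

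You discard the monotonicity term $\langle b(u_h^{n,i-1})-b(u_h^n),\, e_u^{n,i-1}\rangle\ge 0$ and then bound the cross term pointwise by $L_b|e_u^{n,i-1}|^\alpha|e_u^{n,i}-e_u^{n,i-1}|$. After your first Young ($p=q=2$) and H\"older, you are left with $\tfrac{L_b^2}{2L}\sigma(\Omega)^{1-\alpha}\|e_u^{n,i-1}\|^{2\alpha}$. Your second Young with $p=1/\alpha$, $q=1/(1-\alpha)$ then \emph{necessarily} deposits a strictly positive multiple of $\|e_u^{n,i-1}\|^2$ on the right-hand side. Concretely, for any scaling $\mu>0$,
\[
\tfrac{L_b^2}{2L}\sigma(\Omega)^{1-\alpha}\|e_u^{n,i-1}\|^{2\alpha}
\le \tfrac{\alpha\mu L_b^2}{2L}\sigma(\Omega)^{1-\alpha}\|e_u^{n,i-1}\|^{2}
+\tfrac{(1-\alpha)L_b^2}{2L}\sigma(\Omega)^{1-\alpha}\mu^{-\alpha/(1-\alpha)},
\]
so after multiplying by $2\delta$ the coefficient of $\|e_u^{n,i-1}\|^2$ becomes $1+\alpha\mu L_b^2\sigma(\Omega)^{1-\alpha}\delta^2$. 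To make the residual scale as $\delta^{2/(1-\alpha)}$ you must take $\mu\sim\delta^{-2}$, and then that coefficient is $1+\text{const}$, independent of $\delta$; dividing by $1/R(\delta,\tau)$ yields $R(\delta,\tau)(1+\text{const})$, not $R(\delta,\tau)$. There is no choice of the two free parameters that simultaneously gives the contraction factor $R(\delta,\tau)$ and the residual $2C(\alpha)R(\delta,\tau)\delta^{2/(1-\alpha)}$; your ``joint calibration'' cannot be carried out. (Also note your residual carries $L_b^{2}$ and $\sigma(\Omega)^{1-\alpha}$, whereas the stated $C(\alpha)$ has $L_b^{2/(1-\alpha)}$ and $\sigma(\Omega)$.)

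The missing idea is that (A1) gives more than bare monotonicity: from $|x-y|\ge L_b^{-1/\alpha}|b(x)-b(y)|^{1/\alpha}$ one gets the quantitative lower bound
\[
\langle b(u_h^{n,i-1})-b(u_h^n),\, e_u^{n,i-1}\rangle \ \ge\ L_b^{-1/\alpha}\,\|b(u_h^{n,i-1})-b(u_h^n)\|_{(1+\alpha)/\alpha}^{(1+\alpha)/\alpha}.
\]
The paper keeps this term, applies Young to the cross term with exponents $p=(1+\alpha)/\alpha$, $q=1+\alpha$ so that the piece involving $b(u_h^{n,i-1})-b(u_h^n)$ is absorbed by (half of) this lower bound, leaving only $\|e_u^{n,i}-e_u^{n,i-1}\|_{1+\alpha}^{1+\alpha}$. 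A second Young with $p=2/(1+\alpha)$, $q=2/(1-\alpha)$ then converts this into $\tfrac{L}{2}\|e_u^{n,i}-e_u^{n,i-1}\|^2$ plus a pure constant $C(\alpha)L^{(1+\alpha)/(\alpha-1)}$, \emph{without ever touching} $\|e_u^{n,i-1}\|^2$. That is precisely why the coefficient stays at $\tfrac{L}{2}$ and the final factor is exactly $R(\delta,\tau)$.
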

\begin{proof}
Subtracting  \eqref{Pop:sec:2:eq:1a} and \eqref{Pop:sec:2:eq:1b}  from \eqref{Pop:sec:2:eq:2a}, respectively \eqref{Pop:sec:2:eq:2b}, one gets for all $w_h \in W_h$ and $ {\bf v}_h \in V_h$
\begin{eqnarray}
\langle L (e_u^{n,i} - e_u^{n,i-1}) +b(u^{n,i-1}_h) -b(u^n_h), w_h \rangle + \tau  \langle \nabla \cdot e_{\bf q}^{n,i}, w_h \rangle&=& 0  , \label{Pop:sec:2:eq:4a}\\%\label{eq:hoelder:5}\\
\langle e_{\bf q}^{n,i}, {\bf v}_h \rangle-\langle e_u^{n,i},  \nabla  \cdot {\bf v}_h \rangle &=& 0 . \label{Pop:sec:2:eq:4b}%\label{eq:hoelder:6}
\end{eqnarray}
By taking $w_h = e_u^{n,i} \in W_h$, respectively ${\bf v}_h = \tau e_{\bf q}^{n,i} \in V_h$, adding the resulting and after some algebraic calculations one gets
%\begin{equation}\label{eq:hoelder:7}
%\langle L (e_u^{n,i} - e_u^{n,i-1}),  e_u^{n,i} \rangle+ \langle e_s^{n,i-1},  e_u^{n,i} \rangle + \tau \|  e_{\bf q}^{n,i} \|^2 = 0,
%\end{equation}
%which is further equivalent to
\begin{equation}\label{Pop:sec:2:eq:5}%\label{eq:hoelder:8}
\begin{array}{l}
\dfrac{L}{2} \left(\| e_u^{n,i}\|^2  +  \| e_u^{n,i} - e_u^{n,i-1}\|^2 \right)+ \langle b(u^{n,i-1}_h) - b(u^n_h),  e_u^{n,i-1} \rangle+ \tau \|  e_{\bf q}^{n,i} \|^2 \\
\qquad = \dfrac{L}{2} \| e_u^{n,i-1}\|^2 -  \langle b(u^{n,i-1}_h) - b(u^n_h), e_u^{n,i} - e_u^{n,i-1}\rangle.
\end{array}
\end{equation}
By (A1), $  \displaystyle \langle b(u^{n,i-1}_h) - b(u^n_h),  e_u^{n,i-1} \rangle\ge {L_b^{- \frac 1 \alpha}} \|  b(u^{n,i-1}_h) - b(u^n_h) \|_{\frac{1 + \alpha}{\alpha}}^{\frac{1 + \alpha}{\alpha}}$.
%\begin{equation}\label{eq:hoelder:9}
%\langle e_s^{n,i-1},  e_u^{n,i-1} \rangle\ge \frac{1}{L_b^{1/\alpha}} \|  e_s^{n,i-1} \|_{\frac{1 + \alpha}{\alpha}}^{\frac{1 + \alpha}{\alpha}}.
%\end{equation}
%We recall the H\"older inequality, for any $a, b \in {\mathbb{R}}$ and $p, q > 1$ with  $\dfrac{1}{p} + \dfrac{1}{q} = 1$, there holds
%\begin{equation}\label{eq:hoelder:10}
%|a b| \le  \dfrac{|a|^p}{p} + \dfrac{|b|^q}{q}.
%\end{equation}
Using now the inequality in \eqref{Pop:sec:2:eq:2c} with $p = \frac{1 + \alpha}{\alpha}$, $q = 1 + \alpha$, $a =  \frac{ | b(u^{n,i-1}_h) - b(u^n_h)| }{L_b^{\frac{1}{1 + \alpha}}{(\frac{ 2 \alpha}{1 + \alpha})}^{\frac{\alpha}{1 + \alpha}}}$ and $b =  L_b^{\frac{1}{1 + \alpha}} (\frac{ 2 \alpha}{1 + \alpha})^{\frac{\alpha}{1 + \alpha}} | e_u^{n,i} - e_u^{n,i-1} | $ one gets
\begin{equation}\label{Pop:sec:2:eq:6}%\label{eq:hoelder:11}
\begin{array}{rl}
& | \langle  b(u^{n,i-1}_h) - b(u^n_h), e_u^{n,i} - e_u^{n,i-1}\rangle| \\[0.5em]
& \; \le  \displaystyle \frac 1 {2L_b^{\frac 1 \alpha}} {\|  b(u^{n,i-1}_h) - b(u^n_h) \|_{\frac{1 + \alpha}{\alpha}}^{\frac{1 + \alpha}{\alpha}}} +\frac{(2 \alpha)^\alpha L_b}{(\alpha + 1)^{(\alpha + 1)}} \| e_u^{n,i} - e_u^{n,i-1} \|_{1 + \alpha}^{1 + \alpha}.
\end{array}
\end{equation}
From \eqref{Pop:sec:2:eq:6} and \eqref{Pop:sec:2:eq:5} one obtains
%\begin{equation}\label{Pop:sec:2:eq:7}%\label{eq:hoelder:12}
$$
\begin{array}{l}
\frac{L}{2}  \left[ \| e_u^{n,i}\|^2  +  \| e_u^{n,i} - e_u^{n,i-1}\|^2 \right] + \frac{1}{2}\langle b(u^{n,i-1}_h) - b(u^n_h),  e_u^{n,i-1} \rangle+ \tau \|  e_{\bf q}^{n,i} \|^2 \\[0.5em]
\; \le  \frac{L}{2} \| e_u^{n,i-1}\|^2 +  \dfrac{(2 \alpha)^\alpha L_b }{(\alpha + 1)^{(\alpha + 1)}} \| e_u^{n,i} - e_u^{n,i-1} \|_{1 + \alpha}^{1 + \alpha}.
\end{array}
$$%\end{equation}
Using again Young's inequality, but with $p = \frac{2}{1+\alpha}$,  $q = \frac{2}{1 - \alpha}$, $a = \| e_u^{n,i} - e_u^{n,i-1} \|^{1 + \alpha}_{1 + \alpha} (\frac{L}{1+ \alpha})^{\frac{1+\alpha}{2}} \sigma(\Omega)^{\frac{\alpha-1}{2}}$ and $b = \frac{(2\alpha)^\alpha L_b}{(\alpha + 1)^{(\alpha + 1)}} (\frac{1 + \alpha}{L})^{\frac{1 + \alpha}{2}}\sigma(\Omega)^{\frac{1-\alpha}{2}}$ gives
$$%\begin{equation}
\begin{array}{l} %\label{Pop:sec:2:eq:8}%\label{eq:hoelder:13}
 \dfrac{2^\alpha L_b \alpha^\alpha}{(\alpha + 1)^{(\alpha + 1)}} \| e_u^{n,i} - e_u^{n,i-1} \|_{1 + \alpha}^{1 + \alpha} \\
 \qquad \le \dfrac{L}{2} \sigma(\Omega)^{\frac{\alpha-1}{ 1 + \alpha}} \| e_u^{n,i} - e_u^{n,i-1} \|^2_{1+\alpha} + C(\alpha)  L^{\frac{1 + \alpha}{\alpha-1}} \\
 \qquad \le \dfrac{L}{2}  \| e_u^{n,i} - e_u^{n,i-1} \|^2 + C(\alpha)  L^{\frac{1 + \alpha}{\alpha-1}}, %\nonumber
\end{array}
$$%\end{equation}
where
%\begin{equation}\label{eq:hoelder:14}
$C(\alpha)$ is defined in the text of the theorem. In the last step above we used the inequality $\| f  \|_{1+\alpha} \le \sigma(\Omega)^{\frac{1-\alpha}{2 (1 + \alpha)}} \| f  \|_{2}$, which holds true for any $f \in L^2(\Omega)$ and $\alpha \in (0, 1]$, since $\Omega$ is bounded.
%\end{equation}
Now, from the last two %\eqref{Pop:sec:2:eq:7} and  \eqref{Pop:sec:2:eq:8}
$$%\begin{equation}\label{Pop:sec:2:eq:9}%\label{eq:hoelder:15}
\frac{L}{2}  \| e_u^{n,i}\|^2  + \dfrac{1}{2}\langle b(u^{n,i-1}_h) - b(u^n_h),  e_u^{n,i-1} \rangle+ \tau \|  e_{\bf q}^{n,i} \|^2 \le  \dfrac{L}{2} \| e_u^{n,i-1}\|^2 + C(\alpha)  L^{\frac{1 + \alpha}{\alpha-1}}.
$$%\end{equation}
From \eqref{Pop:sec:2:eq:4b} and using Lemma \ref{Pop:sec:2:lem:1}, a Poincare type inequality $\| e_u^{n,i}\| \le C_\Omega \|  e_{\bf q}^{n,i} \| $ can be obtained. Using this in the above,  %\eqref{Pop:sec:2:eq:9}
since $L = %\dfrac
{1}/{\delta}$, one obtains \eqref{Pop:sec:2:eq:3}.
%\begin{equation}\label{eq:hoelder:16}
%(L + \frac{\tau}{C_{\Omega, d}^2}) \| e_u^{n,i}\|^2  + \langle b(u^{n,i-1}_h) - b(u^n_h),  e_u^{n,i-1} \rangle+ \tau \|  e_{\bf q}^{n,i} \|^2 \le  L \| e_u^{n,i-1}\|^2 + 2 C(\alpha)  L^{\frac{1 + \alpha}{1 - \alpha}}.
%\end{equation}
%Recalling that $L = \dfrac{1}{\varepsilon}$, \eqref{hoelder_main_result} follows immediately from \eqref{eq:hoelder:16}.
\hfill \end{proof}

\begin{remark}\label{Pop:sec:2:rem:1}%\label{remark:hoelder_convergence:1}
Observe that since $R(\delta,\tau) < 1$ whereas $\delta$ has a positive power in the last term on the right of \eqref{Pop:sec:2:eq:3}, this theorem gives the convergence of the scheme. More precisely, for any chosen tolerance $TOL$, one can chose $\delta$ such that the term $2 C(\alpha) \delta^{\frac{2}{ 1 - \alpha }} \frac {R(\delta,\tau)}{1-R(\delta,\tau) } < \frac 1 2 TOL$. Since this is the sum of the last terms on the right in \eqref{Pop:sec:2:eq:3}, this can be seen as the total error being accumulated while iterating in one time step. On the other hand, the first term in the right is showing how the error is contracted in one iteration. Thus, choosing $i^* \in N$ large enough s.t. $ R(\delta,\tau) ^{i^*} \| e_u^{n,0}\|^2 \leq \frac 1 2 TOL $ and applying \eqref{Pop:sec:2:eq:3} successively for $i = i^*, i^*-1, \dots, 1$ one obtains that $\| e_u^{n,i}\|^2 < TOL$. Nevertheless, the convergence rate is worsened with the decrease of $\delta$, as $R(\delta,\tau)$, approaches 1 in this case. From theoretical point of view, this results in an increased number of iterations for obtaining the desired accuracy. However, this is rather a pessimistic interpretation, as in all cases studied in Section \ref{Pop:sec:5} the number of iterations remained reasonable.
\end{remark}

\begin{remark}\label{Pop:sec:2:rem:3}%\label{remark:hoelder_convergence:2}
If $b$ is Lipschitz continuous, the problem reduces to the one studied in \cite{Pop:list,Pop:radu2008} and therefore we omit the proof here. In this case, the iteration is a contraction, so the convergence is unconditional and for any $L$ larger or equal to the Lipschitz constant of $b$.
\end{remark}

\begin{remark}\label{Pop:sec:2:rem:2}%\label{remark:hoelder_convergence:2}
Observe that the convergence can be achieved without requiring that the time step $\tau$ is small enough. In fact, when calculating the ratio $\frac {R(\delta,\tau)}{1-R(\delta,\tau) }$  one sees that $\tau$ appears in the denominator, so the larger it is, the better for the convergence of the iterative scheme. Further, the term
$2 C(\alpha) \delta^{\frac{2}{ 1 - \alpha }} \frac {R(\delta,\tau)}{1-R(\delta,\tau) }$
%$2 C(\alpha) R(\varepsilon,\tau) \varepsilon^{\frac{2}{ 1 - \alpha }}$
is practically small without taking a too small $\delta$. For example, if $\alpha = 0.5$, %$\alpha = \frac{3}{4}$, $\alpha = \frac{1}{2}$ or $\alpha = \frac{1}{4}$
the power of $\delta$ in this term becomes 3. %$8, 4$ and $\frac{8}{3}$, respectively.
%This implies that  $\varepsilon$ does not need to be very small. For example, $\alpha = \frac{3}{4}$ and
Taking for example $\delta = 0.01$ (hence $L = 1000$) gives $\delta^{\frac{1+\alpha}{1-\alpha}} = 10^{-6}$. Also, the number $C(\alpha)$ is small too. In the situation above, if $L_b = 0.5$, it is of order $10^{-4}$. %We also point out that the convergence is faster for increasing time step size.
\end{remark}

\section{Iterative schemes based on regularisation}
\label{Pop:sec:4}%\label{subsec:regularization}
As follows form the above, the iterations introduced through Problem $P^{n, i}_h$ are converging also for the case of a H\"older continuous $b$ and do not involve computing any derivatives. However, the iterations only converge linearly. A natural question appears: what is the performance of the new method in comparison with the Newton or the $L$-scheme, but applied for the regularised problems.  To study this aspect we first present below these methods and discuss their convergence.

For simplicity we consider the function $b: {\mathbb{R}} \rightarrow {\mathbb{R}}$, $b(u) = (\max\{u, 0\})^\alpha$. Observe that for $b$ is not Lipschitz for arguments approaching 0 from above. For regularising it we let $\varepsilon > 0$ and consider the functions %(see e.g. \cite{Pop:radu2010})
$b_\varepsilon: {\mathbb{R}} \rightarrow {\mathbb{R}}$,
\begin{displaymath}\label{Pop:sec:2:eq:10}%\label{def:reg}
%\begin{array}{l}
b_\varepsilon (u)  = %\left\{
%\begin{array}{ll}
%\displaystyle b (u) & \, \mbox{ if }\, u \not\in [0, \varepsilon], \\
%\displaystyle \alpha \varepsilon^{\alpha -1} u + (1-\alpha)\varepsilon^{\alpha},  %& \, \mbox{ if }\, u \in [0, \varepsilon], \\
\displaystyle \varepsilon^{\alpha -1} u,  %& \, \mbox{ if }\, u \in [0, \varepsilon], \\
%\end{array} \right.
\mbox{ or } %\\[0.5em]
b_\varepsilon (u)  = %\left\{
%\begin{array}{ll}
%\displaystyle b (u) & \, \mbox{ if }\, u \not\in [0, \varepsilon], \\
\displaystyle (\alpha - 1) \varepsilon^{\alpha-2} u^2 + (2 - \alpha)\varepsilon^{\alpha - 1} u, %& \, \mbox{ if }\, u \in [0, \varepsilon]. \\
%\end{array} \right.
%\end{array}
\end{displaymath}
%\marginpar{changed to linear affine}
%\todo[inline]{Affine regularisation not continuous in $u=0$.}
if $u \in (0, \varepsilon)$, whereas $b_\varepsilon(u) = b(u)$ everywhere else.
Observe that the former is linear in $(0, \varepsilon)$, whereas the latter quadratic.
Clearly, $b_\varepsilon(\cdot)$ is nondecreasing, and both $b_\varepsilon(\cdot), b^\prime_\varepsilon(\cdot)$ are Lipschitz continuous. For the linear regularisation, the Lipschitz constants are $L_{b_\varepsilon} = \varepsilon^{\alpha-1}$, respectively $L_{b^\prime_\varepsilon} = \alpha (1-\alpha) \varepsilon^{\alpha-2}$. Moreover, it holds
$
\displaystyle 0 \le b(x) - b_\varepsilon(x) \le (1-\alpha) \alpha^{\frac \alpha {1-\alpha}} \varepsilon^\alpha.
$
Similar properties can be written for the quadratic regularisation. 

As before, with given $\varepsilon >0$ and $u^{n-1}_{h, \varepsilon} \in W_h$ (observe the dependency of the solution on $\varepsilon$), and with $i \in {\mathbb{N}}$, $i > 0$ being the iteration index, the Newton iterations for Problem $P^n_h$ are defined through \\[0.5em]
{\bf Problem $NEWTON_h^{n,i}$}. \\
Let $u^{n, i-1}_{h, \varepsilon} \in W_h$ be given. Find $(u^{n,i}_{h, \varepsilon}, {\bf q}^{n,i}_{h, \varepsilon}) \in W_h \times V_h$ s. t. for all $w_h \in W_h$ and ${\bf v}_h \in V_h$
\begin{eqnarray}
\langle b_{\varepsilon}(u^{n,i-1}_{h, \varepsilon}) + b^\prime_{\varepsilon} (u^{n,i-1}_{h, \varepsilon})(u^{n,i}_{h, \varepsilon}  - u^{n,i-1}_{h, \varepsilon}), w_h \rangle \nonumber \\
\hskip 2em + \tau \langle \nabla \cdot {\bf q}^{n,i}_{h, \varepsilon}, w_h \rangle&=& \langle b_{\varepsilon}(u^{n-1}_{h, \varepsilon} ), w_h \rangle, \label{Pop:sec:2:eq:11a}\\%\label{eq:Newton1}\\
\langle {\bf q}^{n,i}_{h, \varepsilon}, {\bf v}_h \rangle-\langle u^{n,i}_{h, \varepsilon},  \nabla  \cdot {\bf v}_h \rangle &=& 0. \label{Pop:sec:2:eq:11b} %\label{eq:Newton2}
\end{eqnarray}

\begin{remark}[{\bf Regularised $L$-scheme}]
\label{Pop:sec:2:rem:3} %\label{remark:Lschemereg}
An $L$-scheme for the regularised problem is obtained by replacing  $b^\prime_{\varepsilon} (u^{n,i-1}_{h, \varepsilon})$ with $L \ge 0 $ in \eqref{Pop:sec:2:eq:11a}. The resulting scheme is convergent for $ L \ge %\dfrac
{L_{b_\varepsilon}}/{2}$, as proved in \cite{Pop:list,Pop:pop2004,Pop:radu2006}. Moreover, the convergence holds in $H^1$ and for any initial guess, under very mild restrictions on the time step, but it is only linear. It is worth emphasising on the difference between the $L$-scheme in Section \ref{Pop:sec:3}, designed for H{\"o}lder continuous nonlinearities, and the $L$-scheme for the regularised problems. In the former case the errors at each iteration step consist of two components, one that is contracted, and another that accumulates. The choice of the $L$ parameter is driven by these two: first, the the accumulated errors should remain below a threshold $\frac 1 2 TOL$, and second the contracted ones reduces to the same threshold. For the latter the problem is regularised so that the nonlinearities become Lipschitz continuous, and then the $L$ parameter is taken as the Lipschitz constant of the regularised nonlinearity.
\end{remark}

\begin{remark}[{\bf Convergence of the regularised Newton scheme}]
\label{Pop:sec:2:rem:4}%\label{remark:convergenceNewton}
Two issues concerning the convergence are appearing in this case. First, the solution $u_\varepsilon$ of the regularised problem should not be too far from $u$, the solution to the original problem. This means that $\varepsilon$ should be small enough. On the other hand, the advantage of the Newton scheme is its quadratic convergence. Guaranteeing it requires typically a small $\tau$ because the method is only locally convergent, so the initial guess of the iteration should not be too far from the solution and the choice at hand is the solution at the previous time step. However, $\tau$ and $\varepsilon$ are not uncorrelated, so satisfying both requirements might be quite challenging, if not impossible in certain computations. If one assumes additionally that $b^\prime \ge m > 0$, which rules out the {\it{fast diffusion}} case, the sufficient condition for convergence is to choose $\tau = O( \varepsilon^a h^{d/2})$, with $a$ depending on the H\"older exponent (see \cite{Pop:radu2006}). In the case $b^\prime \ge  0$, one can further perturb $b$ so that $b^\prime_\varepsilon$ is bounded away from 0, e.g. by taking $b^{new}_\varepsilon(u) = \varepsilon u  + b_\varepsilon(u)$ with $b_\varepsilon(u)$ given before. Then the convergence is guaranteed for similar constraints, possibly with a different exponent $a$.
\end{remark}

To summarize, the convergence of the Newton method is depending on the choice of the discretisation and regularisation parameters. Fixing two parameters, e.g. $h$ and $\varepsilon$, only a small enough $\tau$ will guarantee the convergence. Alternatively, for fixed $\tau$ and $\varepsilon$, the mesh size can not be too small, and in case of the Newton scheme divergences, refining the mesh will not help. In other words, to achieve a certain accuracy, i.e. letting $\varepsilon \searrow 0$, the convergence condition for the Newton scheme might become very restrictive.

\section{Numerical examples}
\label{Pop:sec:5}%\label{subsec:regularization}

In this section we provide numerical examples to illustrate the performance of the method. We use the example mentioned in Section \ref{Pop:sec:4},
$ b(u) = \max\{u, 0\}^\alpha$, and for $\alpha = 0.5$. The domain is the square $\Omega=(0,1)\times(0,1)$, and the time interval is $t\in (0.0,0.5]]$.
For evaluating the convergence we take the right hand side and the initial condition so that he exact solution is
\begin{equation}  \label{Pop:example-a:prescribed-solution}
 u(t,x,y) = -\frac{1}{2} + 16\,x(1-x)\,y(1-y) (t + 0.5).
\end{equation}
We choose a source term accordingly. 

For the discretisation we have considered a $32 \times 32$ mesh with different %elements with $N \in \{32, 64, 128\}$. \marginpar{Check if all $N$ needed}
time steps $\tau \in \{0.05, 0.025, 0.0125 \}$, giving 10, 20, respectively 40 time steps. To differentiate between the errors brought by the discretisation itself and those related to the iterative solver we first computed a very accurate approximation of the nonlinear, fully discrete systems. Specifically, with $\Delta u^i$ and $\Delta {\bf q}^i$ denoting the difference between two iterates, the reference solution is the iteration satisfying 
$$
\|\Delta u^i\|_{L^2(\Omega)}+\|\Delta {\bf q}^i\|_{L^2(\Omega)}< 10^{-8}, \text{ and }
 \frac{\|\Delta u^i\|_{L^2(\Omega)}}{\|u^i\|_{L^2(\Omega)}} + \frac{\|\Delta {\bf q}^i\|_{L^2(\Omega)}}{ \|{\bf q}^i\|_{L^2(\Omega)}} < 10^{-8}.
$$
This solution, called below $u_h$, was computed with the $L$-type scheme in Section \ref{Pop:sec:3} to avoid additional regularisation errors. Having obtained $u_h$ we proceed by testing the three schemes discussed here, the $L$-scheme in the framework discussed in Section \ref{Pop:sec:3} (called $HL$), and the two (Newton and $L$) in Section \ref{Pop:sec:4}, involving a regularisation step.

In agreement with the result stated in Theorem \ref{Pop:sec:2:thm:1} we choose an admissible tolerance $TOL$ to be used as stopping criterion for the different iteration schemes. Specifically, if $u_h^\star$ is the numerical $\| u_h^\star - u_h\|_{L^2(\Omega)}<TOL$ where $u_h$ is the (accurate) solution from above and $u_h^\star$ are the solutions

The numerical results are for different tolerances, namely $TOL \in \{10^{-3},$ $10^{-4}, 10^{-5}\}$. For the regularisation based schemes, the problem is first regularised by taking $\varepsilon \in \{10^{-3}$, $10^{-4}, 10^{-5}\}$. For the $L$-scheme we take $L = \varepsilon^{\alpha - 1}$, the Lipschitz constant of $b_\varepsilon$. For the $HL$-scheme we take $L=\frac 1 \delta$ where $\delta$ is such that the condition in Remark \ref{Pop:sec:2:rem:1} on the accumulated error is met.

Table~\ref{Pop:table:result:regularized-newton} gives the total number of Newton iterations and the number of iterations per time step for given different tolerances $TOL$, regularisation parameters $\varepsilon$ and time step sizes $\tau$. Clearly, if convergent the Newton scheme requires the least number of iterations. Observe that the parameters $TOL$ and $\varepsilon$ should be correlated to avoid that the regularisation error becomes dominating. In other words, a smaller $TOL$ requires a smaller $\varepsilon$ for obtaining the convergence. In the same spirit, a smaller $\tau$ requires smaller $TOL$ and $\varepsilon$. For $\tau=0.0125$, it becomes almost impossible to obtain solutions within the required accuracy by using the Newton scheme, as $\varepsilon$ has to be very small and then the condition of the Jacobian becomes very high. This is evidenced by the appearance of cases where the Newton scheme did not converge, which are mentioned as $nc$. Observe that the Newton scheme fails to converge if either the regularisation parameter $\varepsilon$ is too large for the chosen tolerance $TOL$, or if $\varepsilon$ is too low, which makes the problem very badly conditioned.

 \begin{table}[h]
  \centering
  \begin{tabular}{lll|l|l}
    $TOL$ & $\varepsilon$ & Time step $\tau$ & N-iterations & per time step\\
    \hline
    1e-3 & 1e-3 & $\{0.05,0.025,0.0125\}$ & $\{17,24,47\}$ & $\{1.7,1.2,1.2\}$ \\
    1e-3 & 1e-4 & $\{0.05,0.025,0.0125\}$ & $\{16,27,nc\}$ & $\{1.6,1.3, nc\}$ \\
    1e-3 & 1e-5 & $\{0.05,0.025,0.0125\}$ & $\{16,27,nc\}$ & $\{1.6,1.3, nc\}$ \\
    \hline
    1e-4 & 1e-3 & $\{0.05,0.025,0.0125\}$ & $\{22,41,nc\}$ & $\{2.2,2.1, nc\}$ \\
    1e-4 & 1e-4 & $\{0.05,0.025,0.0125\}$ & $\{23,48,nc\}$ & $\{2.3,2.4, nc\}$ \\
    1e-4 & 1e-5 & $\{0.05,0.025,0.0125\}$ & $\{23,46,nc\}$ & $\{2.3,2.3, nc\}$ \\
    \hline
    1e-5 & 1e-3 & $\{0.05,0.025,0.0125\}$ & $\{nc,nc,nc\}$ & $\{nc, nc, nc\}$ \\
    1e-5 & 1e-4 & $\{0.05,0.025,0.0125\}$ & $\{31,59,nc\}$ & $\{3.1,3.0, nc\}$ \\
    1e-5 & 1e-5 & $\{0.05,0.025,0.0125\}$ & $\{31,63,nc\}$ & $\{3.1,3.2, nc\}$ 
  \end{tabular}
 \caption{\label{Pop:table:result:regularized-newton} Results for the Newton scheme. The scheme does not converge for the smallest time step and if the regularisation parameter $\varepsilon$ is not in agreement with the tolerance $TOL$. }
 \end{table}
Similar experiments have been performed for the standard $L$-scheme, which can be applied after regularising the problem. Depending on $\varepsilon$, the Lipschitz constant of $b_\varepsilon$ is $L_b=\varepsilon^{\alpha-1}$. The actual values are given in Table~\ref{Pop:table:l-eps}.
%\marginpar{hier sind es eigentlich andere Werte dann was ich geschrieben habe, ich habe die einfache Regularisierung genommen. Aber jetzt keine "Anderungen}

 \begin{table}[h]
 \centering
  \begin{tabular}{l|lll}
   $\varepsilon$ & $10^{-3}$ & $10^{-4}$ & $10^{-5}$ \\
   \hline
   $L$ & 16  & 50 & 159 
  \end{tabular}
  \caption{\label{Pop:table:l-eps} $L$ values for the standard $L$-scheme, obtained for different values of $\varepsilon$.}
  \end{table}
Table~\ref{Pop:table:result:regularized-standard-l-scheme} gives the convergence results. As for the Newton scheme, the tolerance, the regularisation parameter and the time step should be correlated. A smaller $TOL$ requires a smaller $\varepsilon$ for obtain convergence, otherwise the regularisation error will make it impossible to meet the convergence criterion. This is the reason why the $L$ scheme, though unconditionally convergent in theory since it is a contraction, is marked as not convergent for the case $\varepsilon = 10^{-3}$, if $TOL  = 10^{-4}$ or $10^{-5}$. Also, observe that the Lipschitz constant of $b_\varepsilon$ is proportional to $\varepsilon^{\alpha-1}$, whereas the convergence rate gets close to 1 for large $L$ values, or for small time steps $\tau$ (see \cite{Pop:pop2004}). Therefore small values for $\varepsilon$ and $\tau$, combined with the finite precision arithmetic may lead again to situations where the $L$-scheme does not converge.

In fact, this is also the explanation of the fact that the number of $L$-scheme iterations increases drastically with the decrease of the regularisation parameter. Compared to the Newton scheme, the number of $L$-iterations is much larger. On the other hand, the $L$-scheme is more robust than the Newton scheme, allowing to compute the solution for small time steps $\tau$ or for small regularisation  parameters $\varepsilon$. 

   \begin{table}[h]
  \centering
  \begin{tabular}{lll|l|l}
    $TOL$ & $\varepsilon$ & Time step $\tau$ & $L$-iterations & per time step\\
    \hline
    1e-3 & 1e-3 & $\{0.05,0.025,0.0125\}$ & $\{305, 777,  1937\}$ & $\{30.5, 38.9,  48.4\}$ \\
    1e-3 & 1e-4 & $\{0.05,0.025,0.0125\}$ & $\{969, 2491, 6209\}$ & $\{96.9, 124.6, 155.2\}$ \\
    1e-3 & 1e-5 & $\{0.05,0.025,0.0125\}$ & $\{3058,7892, 19713\}$ & $\{305.8,394.6, 492.8\}$ \\
    \hline
    1e-4 & 1e-3 & $\{0.05,0.025,0.0125\}$ & $\{479, nc, nc\}$ & $\{47.9, nc, nc\}$ \\
    1e-4 & 1e-4 & $\{0.05,0.025,0.0125\}$ & $\{1505, 4058, 10920\}$ & $\{150.5, 202.9, 273\}$ \\
    1e-4 & 1e-5 & $\{0.05,0.025,0.0125\}$ & $\{4751, 12873, 34829\}$ & $\{475.1, 643.7, 870.7\}$ \\
    \hline
    1e-5 & 1e-3 & $\{0.05,0.025,0.0125\}$ & $\{nc, nc, nc\}$ & $\{nc, nc, nc\}$ \\
    1e-5 & 1e-4 & $\{0.05,0.025,0.0125\}$ & $\{2045, 5629, nc \}$ & $\{204.5, 281.5, nc \}$ \\
    1e-5 & 1e-5 & $\{0.05,0.025,0.0125\}$ & $\{6459, 17792, 49914 \}$ & $\{645.9, 889.1, 1247.9\}$ 
  \end{tabular}
 \caption{\label{Pop:table:result:regularized-standard-l-scheme} Results for the standard $L$-scheme. The scheme does not converge if the regularisation parameter $\varepsilon$ is not in agreement with the tolerance $TOL$.}
 \end{table}
Finally we draw our attention to the $HL$-scheme, where the parameter $L$ is chosen as mentioned in Remark \ref{Pop:sec:2:rem:1}, depending on $TOL$. Since the domain is the unit square one has $C_\Omega = \sigma(\Omega) = 1$ and thus $R(\delta,\tau)=(1+\tau\delta)^{-1}$. For $\alpha = 0.5$, to reduce the accumulated errors below $\frac 1 2 TOL$ one needs to take $ \delta < \frac 3 2 (\tau  TOL)^{\frac 1 3}$, while $L=\frac 1 \delta$. The corresponding values are given in Table~\ref{Pop:table:delta-tau-relation}. Observe that the values of $L$ in this case are similar to the ones for the standard $L$ scheme, except for the smallest tolerances. Also, the $L$ values increase for smaller $TOL$ and smaller time steps $\tau$, which was not the case of the standard $L$ scheme.

\begin{table}[h]
 \centering
  \begin{tabular}{lll|l}
   $TOL$ & $\tau$ & $\delta$ & $L$\\
   \hline
   1e-3 & 0.05   & 0.055 & 19 \\
   1e-3 & 0.025  & 0.044 & 23 \\
   1e-3 & 0.0125 & 0.035 & 29 \\
   \hline
   1e-4 & 0.05   & 0.025 & 40 \\
   1e-4 & 0.025  & 0.020 & 50 \\
   1e-4 & 0.0125 & 0.016 & 62 \\
   \hline
   1e-5 & 0.05   & 0.012  & 84 \\
   1e-5 & 0.025  & 0.0094 & 106 \\
   1e-5 & 0.0125 & 0.0075 & 134 
   %\hline
  \end{tabular}
  \caption{\label{Pop:table:delta-tau-relation} The $L$ parameters for the $HL$-scheme, computed for different values of $TOL$. The total iteration error is guaranteed below $TOL$ (see also Remark \ref{Pop:sec:2:rem:1}).}
 \end{table}
The convergence results are given in Table~\ref{Pop:table:result:hoelder-l-scheme}. Since the $L$ parameters have similar values for both $L$-type schemes, the number of iterations in both schemes is comparable whenever the standard $L$-scheme converges. However, for the $HL$-scheme, $L$ can be chosen automatically, based on the required tolerance $TOL$ and on the time step $\tau$, which can lead to lower and hence more optimal values as the convergence rate depends on $L$. Clearly, decreasing the tolerance $TOL$ leads to an increasing $L$, which deteriorates the convergence rate. However, the $HL$-scheme converged for all combinations of parameters. 

   \begin{table}[h!]
  \centering
  \begin{tabular}{ll|l|l}
    $TOL$ & Time step $\tau$ & $HL$-iterations & per time step\\
    \hline
    1e-3 & $\{0.05,0.025,0.0125\}$ & $\{370,  1143,  3581  \}$ & $\{37.0,  57.2,  89.5\}$ \\
    1e-4 & $\{0.05,0.025,0.0125\}$ & $\{1204, 4049,  13530 \}$ & $\{120.4, 202.5,  338.2\}$ \\
    1e-5 & $\{0.05,0.025,0.0125\}$ & $\{3433, 11924, 42294 \}$ & $\{343.3, 596.2, 1057.4\}$ \\
  \end{tabular}
 \caption{\label{Pop:table:result:hoelder-l-scheme} Results for the standard $HL$-scheme. The scheme converges for all values of $TOL$ and all time steps $\tau$.}
 \end{table}
When comparing the three schemes, it becomes clear that the Newton scheme requires the least number of iterations whenever this converges. On the other hand, the Newton scheme was the one which did not converge in the most of the cases considered here, so it is least robust. Also the standard $L$-scheme displayed cases where convergence failed. Besides, both schemes are involving a regularisation step. No regularisation instead is needed for the $HL$-scheme. This scheme certainly requires more iterations than the Newton scheme, but generally less than the standard $L$-scheme. Most important, it displayed a robust behaviour, as it converged in all experiments. In fact, this convergence can be achieved for any tolerance $TOL$ and time step $\tau$. 

It is worth mentioning that the total execution time is influenced not only by the number of iterations, but also by the time required to solve the linear systems corresponding to each iteration, and by the time needed to assemble the discretisation matrices. Among all three schemes, the Newton scheme is closest to generate ill conditioned matrices, if not singular. Therefore the linear solvers can become more expensive than in the case of the $L$-type schemes. Moreover, the linear system needs to be reassembled completely every iteration, as the Jacobian depends on the current iteration. On the other hand, the $L$-type schemes are better conditioned. For the example presented above, the linear systems for both $L$ schemes are involving the discrete Laplacian and the discretisation of the identity operator multiplied by $L$. This not only generates better conditioned matrices, but these matrices remain unchanged for every iteration. In this case, a solver based on the $LU$-decomposition is an effective approach, as this decomposition needs to be performed only once. 

\section{Conclusion}\label{Pop:sec:conclusion}
We discuss iterative schemes for solving the fully discrete nonlinear systems obtained by a backward Euler - lowest order Raviart-Thomas mixed finite element discretisation of a class of degenerate parabolic problem. Appearing as models of practical relevance, the nonlinear function involved in the model must be increasing and H{\" o}lder continuous, but may remain constant over intervals.  In consequence, two kinds of degeneracy are allowed, slow and fast diffusion. This leads to fully discrete systems that have singular Jacobians, which brings difficulties in finding robust iterative solvers. 

We present here an approach inspired by the $L$-scheme, which is suited for the case of H{\"o}der continuous nonlinearities. To apply the Newton scheme or the standard $L$-scheme in such a case, one needs to regularise first the problem, i.e. to approximate the nonlinearity by a Lipschitz continuous one. This step is associated with additional errors. If highly accurate approximations of the exact, fully discrete solutions are needed, the regularisation step may be the cause of the fact that the convergence is very slow, if not impossible. 
The scheme discussed here makes no use of any regularisation. Instead, the parameter $L$ is chosen not as the Lipschitz constant of the nonlinearity, but in such a way that the error has a guaranteed decay below any chosen tolerance. We provide a rigourous proof for this decay, which also gives a practical way to choose the parameter $L$. 

We present numerical experiments where we compare the behaviour of the three schemes: Newton, standard $L$, and the $L$-variant proposed here. As resulting from these experiments, the Newton scheme requires the least number of iterations, but is also the least robust of all as there were the most cases where it did not converge. The standard $L$-scheme is more robust, at the expense of a high number of iterations. Also, convergence could not be achieved in all cases, in particular if the regularisation parameter is not in agreement with the required tolerance. The new scheme combines is improving these aspects: it shows convergence for any required tolerance, and any choice of the time step. Nevertheless, an optimisation of the choice of $L$ and possibly in combination with an optimal linear solver can make the proposed scheme an effective alternative to the traditional ones.

\section*{Acknowledgement}
\noindent
The research is partially supported by the Norwegian Research Council (NFR) through the NFR-DAAD grant 255715, the VISTA project AdaSim 6367 and the project Toppforsk 250223, by Statoil through the Akademia Grant and by the Research Foundation-Flanders (FWO) through the Odysseus programme (project G0G1316N).

\ifx\undefined\bysame
\newcommand{\bysame}{\leavevmode\hbox to3em{\hrulefill}\,}
\fi

\end{document}